\tikzstyle{None}=[inner sep=0mm]
\tikzstyle{Empty node}=[fill=black, draw=black, shape=circle, inner sep=0cm, minimum height=0.2cm]
\tikzstyle{Bead}=[fill=white, draw=black, shape=circle, minimum height=0.4cm, inner sep=0cm]
\tikzstyle{Move it}=[->]
\tikzstyle{Extra box}=[-, fill=none, dashed]
\tikzstyle{Border edge}=[-, thick, fill=none]
\tikzstyle{Grey diagram}=[-, fill={rgb,255: red,122; green,122; blue,122}, thick]
\theoremstyle{plain}
\newtheorem{theorem}{Theorem}[section]
\newtheorem{lemma}[theorem]{Lemma}
\newtheorem{corollary}[theorem]{Corollary}
\theoremstyle{definition}
\newtheorem{example}[theorem]{Example}
\theoremstyle{remark}
\newtheorem{remark}[theorem]{Remark}
\newcommand{\Z}{\mathbb{Z}}
\newcommand{\Q}{\mathbb{Q}}
\DeclareMathOperator{\sgn}{sgn}
\DeclareMathOperator{\Abc}{Abc}
\DeclareMathOperator{\Par}{Par}
\DeclareMathOperator{\Com}{Com}
\DeclareMathOperator{\supp}{supp}
\DeclareMathOperator{\wt}{wt}
\DeclareMathOperator{\sh}{sh}
\newcommand{\List}[2]{#1_1,#1_2,\dots ,#1_{#2}}
\title[Proof of the plethystic Murnaghan--Nakayama rule]{Proof of the plethystic Murnaghan--Nakayama rule using Loehr's labelled abacus}
\author{Pavel Turek}
\date{August 13, 2025}
\subjclass[2020]{Primary: 05E05, Secondary: 05A19, 05E10, 20C30}
\address{Department of Mathematics, Royal Holloway, University of London, Egham, Surrey TW20 0EX, UK}
\email{pkah149@live.rhul.ac.uk}
\begin{document}	 
	\begin{abstract}
		The plethystic Murnaghan--Nakayama rule describes how to decompose the product of a Schur function and a plethysm of the form $p_r\circ h_m$ as a sum of Schur functions. We provide a short, entirely combinatorial proof of this rule using the labelled abaci introduced in \fullcite{LoehrAbacus10}.
	\end{abstract}
	\maketitle 
	
	\thispagestyle{empty}
	
	\section{Introduction}\label{Sec intro}
	
	In 2010, Loehr \cite{LoehrAbacus10} introduced a labelled abacus as a combinatorial model for antisymmetric polynomials $a_{\beta}$. By considering appropriate moves of labelled beads and their collisions, he proved standard formulas for decompositions of products of Schur polynomials with other symmetric polynomials, namely Pieri's rule, Young's rule, the Murnaghan--Nakayama rule and the Littlewood--Richardson rule, as well as the equivalence of the combinatorial and the algebraic definitions of Schur polynomials and a formula for inverse Kostka numbers. We follow the slogan `when beads bump, objects cancel' from \cite{LoehrAbacus10} and enrich this collection of results by proving the plethystic Murnaghan--Nakayama rule.
	
	\begin{theorem}[Plethystic Murnaghan--Nakayama rule]\label{Theorem pMN rule}
		Let $\mu$ be a partition and $r$ and $m$ be positive integers. Then
		\begin{equation*}
		s_{\mu}(p_r\circ h_m)=\sum_{\mu\subseteq\lambda\in\Par(|\mu|+rm)}\sgn_r(\lambda/\mu)s_{\lambda}. 
		\end{equation*}
	\end{theorem}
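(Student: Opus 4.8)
The plan is to realise the left-hand side inside Loehr's labelled abacus and let the beads move. Working in $n$ variables with $n$ at least the number of parts of $\mu$, I would set $\beta=\mu+\delta$ for $\delta=(n-1,\dots,1,0)$ and use $s_\mu a_\delta=a_\beta$ to reduce the theorem to computing the antisymmetric polynomial $(p_r\circ h_m)\,a_\beta$ and re-expanding it in the basis $\{a_{\lambda+\delta}\}$; the coefficient of $a_{\lambda+\delta}$ is then the coefficient of $s_\lambda$, and the symmetric-function identity follows by stability as $n$ grows. Since $p_r\circ h_m=h_m(x_1^r,x_2^r,\dots)=\sum_{|c|=m}\prod_\ell x_\ell^{rc_\ell}$, multiplying a labelled configuration of $a_\beta$ by one monomial $\prod_\ell x_\ell^{rc_\ell}$ slides the bead carrying label $\ell$ exactly $rc_\ell$ steps to the right, so summing over all compositions $c$ of $m$ into $n$ parts amounts to distributing $m$ indistinguishable jumps of length $r$ among the labelled beads. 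Thus $(p_r\circ h_m)\,a_\beta$ is the signed weighted sum over all abacus states reachable from $\beta$ by such a distribution of $r$-jumps.

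Next I would invoke the principle \emph{``when beads bump, objects cancel.''} A reached state need not have distinct bead positions, and even when it does the same monomial arises from many pairs of permutation and jump data. I would define a sign-reversing involution pairing every state in which two beads share a position, or more generally in which the recorded jumps fail to assemble into a legal strictly decreasing arrangement: swapping the labels (equivalently, the jump data) of the two offending beads negates the sign while fixing the weight, so all such contributions cancel. Because a length-$r$ jump preserves a bead's position modulo $r$, every bead stays on its runner, so the surviving states share the $r$-core of $\mu$ and differ from it by the addition of $m$ cells to the $r$-quotient. Translating back to Young diagrams, this is precisely the addition of $m$ disjoint $r$-ribbons, i.e.\ the skew shapes $\lambda/\mu$ with $\mu\subseteq\lambda\in\Par(|\mu|+rm)$ admitting such a tiling.

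The final step is to match the residual sign of each surviving state with $\sgn_r(\lambda/\mu)$. An individual $r$-jump that clears the beads it passes contributes the height sign $(-1)^{\mathrm{ht}}$ of the ribbon it inserts, exactly as in the classical Murnaghan--Nakayama rule, and the complete-homogeneous shape of $h_m$ should force the $m$ ribbons into the non-overlapping, weakly ordered pattern that $\sgn_r$ encodes. I would calibrate the bookkeeping against the two boundary cases $r=1$ (where the statement collapses to Young's rule and every sign is $+1$) and $m=1$ (where $p_r\circ h_1=p_r$ recovers the classical rule). Collecting the surviving states by their final position set $\lambda+\delta$ and dividing by $a_\delta$ then produces the claimed expansion.

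I expect the crux to be the middle step. With $m$ simultaneous jumps rather than the single jump of the classical rule, the involution must cancel exactly the illegal interferences between distinct beads' trajectories while leaving a sign that does not depend on the order in which the ribbons are read off. Making this order-independence precise -- equivalently, showing that the product of height signs over any admissible $r$-ribbon tiling is well defined and equals $\sgn_r(\lambda/\mu)$ -- is where the real work lies, and is presumably where the \emph{labelled} (rather than unlabelled) abacus earns its keep.
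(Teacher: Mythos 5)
Your overall frame---reduce to antisymmetric polynomials via $s_\mu a_{\delta}=a_{\mu+\delta}$, expand $p_r\circ h_m$ into monomials $x^{r\beta}$ so that each summand slides bead $\ell$ by $r\beta_\ell$, and then cancel with a sign-reversing involution---is exactly the paper's setup. But your middle step contains a genuine error. You claim the surviving states are indexed by all $\lambda\supseteq\mu$ with $|\lambda|=|\mu|+rm$ such that $\lambda/\mu$ admits a tiling by $m$ $r$-ribbons (equivalently, $\lambda$ and $\mu$ share an $r$-core). That set is strictly larger than the set of $r$-decomposable shapes, and the theorem requires the extra shapes to get coefficient $0$. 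Concretely, take $\mu=\varnothing$, $r=m=2$: then $p_2\circ h_2=s_{(4)}-s_{(3,1)}+s_{(2,2)}$, so $\lambda=(1,1,1,1)$ and $\lambda=(2,1,1)$ have coefficient $0$, yet both are tileable by two dominoes (both have empty $2$-core). Worse for your scheme: with $N=4$ beads at positions $\{3,2,1,0\}$, the final position set $(1,1,1,1)+\delta=\{4,3,2,1\}$ is reached by two different jump assignments (bead at $0$ jumps twice; or bead at $0$ jumps to $2$ while bead at $2$ jumps to $4$), both ending in perfectly legal configurations with \emph{distinct} positions, and with opposite signs. An involution that only pairs states ``in which two beads share a position'' never touches these terms, so they do not cancel. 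Your proposed repair---proving the product of height signs over a tiling is order-independent---cannot close this gap either: that product \emph{is} well defined and equals $\pm 1$ for every tileable shape, whereas the coefficient of $s_{(1,1,1,1)}$ must be $0$; no well-definedness statement can produce the required vanishing.

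What is missing is a canonical ordering of the jumps, and this is where the labelled abacus actually earns its keep in the paper. There, a pair $(w,\beta)$ is processed deterministically: scan positions left to right and, whenever the bead $B$ at the current position still has outstanding jumps, perform a single $r$-move of $B$; thus moves occur with strictly increasing starting positions, which translates exactly into ribbons added with weakly decreasing tops, i.e.\ into $r$-decomposability. If a scanned bead cannot move because its target position is occupied, the pair is declared unsuccessful, and the involution swaps the two colliding beads and transfers the appropriate number of jumps between their $\beta$-entries; this cancels \emph{all} pairs whose jump data fail to conform to the canonical order---in particular both assignments aiming at $(1,1,1,1)$ above, each of which collides during the scan even though its end configuration would be legal. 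The successful pairs then biject with $\bigcup_{\lambda}\Abc_N(\lambda)$, the union over $r$-decomposable $\lambda/\mu$, with the sign tracked one move at a time (each single $r$-move changes the sign by $(-1)^u$, $u$ the number of beads jumped over, matching the height of the corresponding ribbon). Without this forced order, and without the involution acting on the pairs $(w,\beta)$ themselves rather than on end states, the cancellation your argument needs simply is not there.
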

	See \S\ref{Sec part} for definitions of $\sgn_r$ and $r$-decomposable skew partitions, which are the skew partitions for which $\sgn_r$ is non-vanishing.
	
	More precisely, we prove the formula in Theorem~\ref{Theorem pMN rule} for symmetric polynomials in $N$ variables where $N\geq |\mu|+rm$. This is equivalent to Theorem~\ref{Theorem pMN rule} as both sides of the formula have degree $|\mu|+rm$. One can easily extend the result to a decomposition of $s_{\mu}\left(p_{\rho}\circ h_{\nu} \right) $ as a sum of Schur functions for any non-empty partitions $\rho$ and $\nu$ by iterating Theorem~\ref{Theorem pMN rule} and using the formula $p_{\rho}\circ h_{\nu}=\prod_{i,j} p_{\rho_i}\circ h_{\nu_j}$.
	
	The plethystic Murnaghan--Nakayama rule is a generalisation of the usual Murnaghan--Nakayama rule, which can be obtained from Theorem~\ref{Theorem pMN rule} by letting $m=1$, that is by replacing the plethysm $p_r\circ h_m$ with $p_r$. By letting $r=1$ instead, one obtains Young's rule which describes the decomposition of $s_{\mu}h_m$ as a sum of Schur functions.
	
	While a description of the plethysm $p_r\circ h_m = p_r\circ s_{(m)}$ is known and follows from Theorem~\ref{Theorem pMN rule} after letting $\mu=\o$, in general, it is a difficult problem to decompose a plethysm as a sum of Schur functions; see \cite[Problem~9]{StanleyPositivity00}, which asks for a decomposition of plethysms of the form $s_{(a)}\circ s_{(b)}$. Plethysms play an important role not only in the study of symmetric functions but also in the representation theory of symmetric groups and general linear groups; see \cite[Chapter~7:~Appendix~2]{StanleyEnumerativeII99}. The connection of plethysms and representation theory was used, for instance, in \cite{deBoeckPagetWildonPlethysms21} to find the maximal constituents of plethysms of Schur functions using the highest weight vectors.
	
	The plethystic Murnaghan--Nakayama rule appeared first in \cite[p.29]{DesarmenienLeclercThibonHallFunctions94}, where it was proved using Muir's rule. Since then, it has been proved using several different methods: in \cite[Proposition~4.3]{EvseevPagetWildonDeflation14} characters of symmetric groups are used, \cite{WildonPMNRule16} uses James' (unlabelled) abacus and induction on $m$ and \cite[Corollary~3.8]{cao2022plethystic} uses vertex operators. In comparison to these proofs, our elementary proof using the labelled abaci arises naturally by `merging' the proofs from \cite{LoehrAbacus10} of the Murnaghan--Nakayama rule and Young's rule.
	
	Since the publication of the original paper introducing the labelled abaci, Loehr has used it to prove the Cauchy product identities in \cite{LoehrAbacusCauchyProducts19}, and together with Wills they introduced abacus-tournaments to study Hall--Littlewood polynomials in \cite{LoehrWillsAbacusTournament16}.
	
	\section{Definitions}\label{Sec defns}
	
	\subsection{Partitions}\label{Sec part}
	
	A \textit{partition} $\lambda=(\lambda_1,\lambda_2,\dots,\lambda_l)$ is a non-increasing sequence of positive integers. The \textit{size} of a partition $\lambda$, denoted by $|\lambda|$, equals $\sum_{i=1}^l \lambda_i$. We call the number of elements of $\lambda$ the \textit{length} of $\lambda$ and denote it by $\ell(\lambda)$. We use the convention that for $i>\ell(\lambda)$ we have $\lambda_i=0$, and we allow ourselves to attach extra zeros to a partition without changing it. We write $\Par_{\leq N}$ for the set of partitions of length at most $N$, $\Par(n)$ for the set of partitions of size $n$ and $\Par_{\leq N}(n)$ for the intersection of these two sets.
	
	The \textit{Young diagram} of a partition $\lambda$ is $Y_{\lambda}=\left\lbrace (i,j)\in\mathbb{N}^2 : i\leq \ell(\lambda), j\leq \lambda_i \right\rbrace $ and we refer to its elements as \textit{boxes}. We write $\mu\subseteq\lambda$ whenever $Y_{\mu}\subseteq Y_{\lambda}$. A \textit{skew partition} $\lambda/\mu$ is a pair of partitions $\mu\subseteq\lambda$ and its Young diagram is $Y_{\lambda/\mu}=Y_{\lambda}\setminus Y_{\mu}$. We define the \textit{top} of a skew partition $\lambda/\mu$, denoted as $t(\lambda/\mu)$, to be $0$ if $\lambda=\mu$, and the least $i$ such that $\lambda_i\neq \mu_i$ otherwise. We similarly define the \textit{bottom} $b(\lambda/\mu)$ of a skew partition by replacing the word `least' with `greatest'.
	
	Let $r$ be a positive integer. An \textit{$r$-border strip} is a skew partition $\lambda/\mu$ consisting of $r$ edge-adjacent boxes such that for all $(i,j)\in Y_{\lambda/\mu}$ we have $(i+1,j+1)\notin Y_{\lambda/\mu}$. It follows from the definition that for any partition $\lambda$ and a non-negative integer $t$, there is at most one $r$-border strip $\lambda/\mu$ with $t(\lambda/\mu)=t$. A skew partition $\lambda/\mu$ is \textit{$r$-decomposable} if there are partitions
	\begin{equation}\label{Eq chain}
	\mu=\gamma^{(0)}\subseteq\gamma^{(1)}\subseteq\dots\subseteq\gamma^{(d)}=\lambda
	\end{equation}
	such that the skew partition $\gamma^{(i+1)}/\gamma^{(i)}$ is an $r$-border strip for all $0\leq i\leq d-1$ and $t(\gamma^{(1)}/\gamma^{(0)})\geq t(\gamma^{(2)}/\gamma^{(1)})\geq \dots\geq t(\gamma^{(d)}/\gamma^{(d-1)})$. If such a decomposition exists, it is unique as there is a unique choice for $\gamma^{(d-1)}$ as $\lambda/\gamma^{(d-1)}$ is an $r$-border strip with $t(\lambda/\gamma^{(d-1)})=t(\lambda/\mu)$ and an inductive argument then applies. Examples of Young diagrams of $r$-border strips and an $r$-decomposable skew partition are in Figure~\ref{Figure border strips}.
	
	\begin{figure}[ht]
		\begin{tikzpicture}[x=0.5cm, y=0.5cm]
		\begin{pgfonlayer}{nodelayer}
		\node [style=None] (0) at (-8, 0) {};
		\node [style=None] (1) at (-6.5, 0) {};
		\node [style=None] (2) at (-6.5, 1.5) {};
		\node [style=None] (3) at (-0.5, 1.5) {};
		\node [style=None] (4) at (-8, 9) {};
		\node [style=None] (5) at (-5, 1.5) {};
		\node [style=None] (6) at (-5, 4.5) {};
		\node [style=None] (7) at (-3.5, 4.5) {};
		\node [style=None] (8) at (-3.5, 7.5) {};
		\node [style=None] (9) at (-0.5, 7.5) {};
		\node [style=None] (10) at (-0.5, 9) {};
		\node [style=None] (11) at (-2, 7.5) {};
		\node [style=None] (12) at (-2, 3) {};
		\node [style=None] (13) at (-3.5, 3) {};
		\node [style=None] (14) at (-3.5, 1.5) {};
		\node [style=None] (15) at (-0.5, 4.5) {};
		\node [style=None] (16) at (1, 4.5) {};
		\node [style=None] (17) at (1, 7.5) {};
		\node [style=None] (18) at (4, 7.5) {};
		\node [style=None] (19) at (4, 9) {};
		\node [style=None] (20) at (-6.5, 9) {};
		\node [style=None] (21) at (-5, 9) {};
		\node [style=None] (22) at (-3.5, 9) {};
		\node [style=None] (23) at (-2, 9) {};
		\node [style=None] (24) at (-8, 7.5) {};
		\node [style=None] (25) at (-3.5, 6) {};
		\node [style=None] (26) at (-8, 6) {};
		\node [style=None] (27) at (-8, 4.5) {};
		\node [style=None] (28) at (-8, 3) {};
		\node [style=None] (29) at (-8, 1.5) {};
		\node [style=None] (30) at (2.5, 9) {};
		\node [style=None] (31) at (1, 9) {};
		\node [style=None] (32) at (2.5, 7.5) {};
		\node [style=None] (33) at (1, 6) {};
		\node [style=None] (34) at (-0.5, 3) {};
		\node [style=None] (35) at (-2, 1.5) {};
		\node [style=None] (36) at (-4.25, 2.25) {};
		\node [style=None] (37) at (-4.25, 3.75) {};
		\node [style=None] (38) at (-2.75, 3.75) {};
		\node [style=None] (39) at (-2.75, 6.75) {};
		\node [style=None] (40) at (-2.75, 2.25) {};
		\node [style=None] (41) at (-1.25, 2.25) {};
		\node [style=None] (42) at (-1.25, 6.75) {};
		\node [style=None] (43) at (0.25, 5.25) {};
		\node [style=None] (44) at (0.25, 8.25) {};
		\node [style=None] (45) at (3.25, 8.25) {};
		\node [style=None] (46) at (-4, 2.25) {$1$};
		\node [style=None] (47) at (-1, 2.25) {$2$};
		\node [style=None] (48) at (0.5, 5.25) {$3$};
		\end{pgfonlayer}
		\begin{pgfonlayer}{edgelayer}
		\draw [style=Grey diagram] (1.center)
		to (0.center)
		to (4.center)
		to (10.center)
		to (9.center)
		to (8.center)
		to (7.center)
		to (6.center)
		to (5.center)
		to (2.center)
		to cycle;
		\draw [style=Border edge] (8.center) to (11.center);
		\draw [style=Border edge] (11.center) to (12.center);
		\draw [style=Border edge] (12.center) to (13.center);
		\draw [style=Border edge] (13.center) to (14.center);
		\draw [style=Border edge] (14.center) to (5.center);
		\draw [style=Border edge] (5.center) to (6.center);
		\draw [style=Border edge] (6.center) to (7.center);
		\draw [style=Border edge] (7.center) to (8.center);
		\draw [style=Border edge] (11.center) to (9.center);
		\draw [style=Border edge] (9.center) to (3.center);
		\draw [style=Border edge] (3.center) to (14.center);
		\draw [style=Border edge] (15.center) to (10.center);
		\draw [style=Border edge] (10.center) to (19.center);
		\draw [style=Border edge] (19.center) to (18.center);
		\draw [style=Border edge] (18.center) to (17.center);
		\draw [style=Border edge] (17.center) to (16.center);
		\draw [style=Border edge] (16.center) to (15.center);
		\draw (24.center) to (17.center);
		\draw (26.center) to (33.center);
		\draw (27.center) to (15.center);
		\draw (28.center) to (34.center);
		\draw (29.center) to (2.center);
		\draw (20.center) to (2.center);
		\draw (21.center) to (6.center);
		\draw (22.center) to (13.center);
		\draw (23.center) to (35.center);
		\draw (10.center) to (9.center);
		\draw (31.center) to (17.center);
		\draw (30.center) to (32.center);
		\draw [style=Extra box] (36.center) to (37.center);
		\draw [style=Extra box] (37.center) to (38.center);
		\draw [style=Extra box] (38.center) to (39.center);
		\draw [style=Extra box] (40.center) to (41.center);
		\draw [style=Extra box] (41.center) to (42.center);
		\draw [style=Extra box] (43.center) to (44.center);
		\draw [style=Extra box] (44.center) to (45.center);
		\end{pgfonlayer}
		\end{tikzpicture}
		\caption{Let $\gamma^{(0)}=\mu=(5,3,3,2,2,1), \gamma^{(1)}=(5,4,4,4,3,1), \gamma^{(2)}=(5,5,5,5,5,1)$ and $\gamma^{(3)}=\lambda=(8,6,6,5,5,1)$. The above dashed lines, labelled by $i=1,2,3$, pass through the Young diagrams of the $5$-border strips $\gamma^{(i)}/\gamma^{(i-1)}$. The bottoms of these $5$-border strips are $5,5$ and $3$, respectively, while their tops are $2,2$ and $1$, respectively. As the tops are in non-increasing order, the skew partition $\lambda/\mu$ is $5$-decomposable.}
		\label{Figure border strips}
	\end{figure}
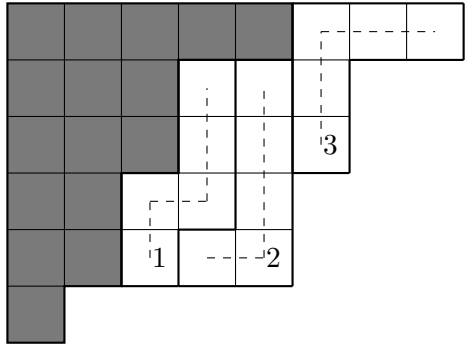
	
	For an $r$-border strip $\lambda/\mu$ we define its $\textit{sign}$ denoted by $\sgn(\lambda/\mu)$ as $(-1)^{b(\lambda/\mu)-t(\lambda/\mu)}$. For any skew partition $\lambda/\mu$ we then let $\sgn_r(\lambda/\mu)=\sgn(\gamma^{(1)}/\gamma^{(0)})\sgn(\gamma^{(2)}/\gamma^{(1)})\cdots\sgn(\gamma^{(d)}/\gamma^{(d-1)})$ where $\gamma^{(i)}$ are as in (\ref{Eq chain}) if $\lambda/\mu$ is $r$-decomposable, and $\sgn_r(\lambda/\mu)=0$ otherwise. Looking at Figure~\ref{Figure border strips}, the signs of the $5$-border strips there are $-1,-1$ and $1$, respectively, and hence $\sgn_5(\lambda/\mu)=1$.
	
	\subsection{Symmetric polynomials}\label{Sec sym fns}
	
	A \textit{composition} of a non-negative integer $m$ is a sequence $\beta=(\beta_1,\beta_2,\dots,\beta_N)$ of non-negative integers such that $\sum_{i=1}^N \beta_i=m$. The \textit{length} of a composition is the number of its elements and we write $\Com_N(m)$ for the set of compositions of $m$ of length $N$.
	
	We now introduce the required elements of the ring of symmetric polynomials in $N$ variables, called $\Lambda_N$, as defined, for instance, in \cite[\S I.2]{MacdonaldPolynomials95}. For a positive integer $m$, the \textit{complete homogeneous symmetric polynomial} $h_{m}(\List{x}{N})$ is defined as $\sum_{\beta\in \Com_N(m)}x^{\beta}$, where $x^{\beta}$ is the monomial $x_1^{\beta_1}x_2^{\beta_2}\dots x_N^{\beta_N}\in\Z\left[x_1,x_2,\dots,x_N \right] $. If $r$ is a positive integer, the \textit{power sum symmetric polynomial} $p_r(\List{x}{N})$ is defined as $\sum_{i=1}^N x_i^r$.
	
	If $g$ is an element of $\Lambda_N$, we define the \textit{plethysm} $p_r\circ g (\List{x}{N})$ as $g(\List{x^r}{N})$. In particular, if $g=h_m$, we get $p_r\circ h_m(\List{x}{N})=\sum_{\beta\in \Com_N(m)}x^{r\beta}$, where $r\beta=(\List{r\beta}{N})$. One can define a plethysm $f\circ g$ for any elements $f$ and $g$ of $\Lambda_N$ by extending the map $\cdot\circ g$ to an endomorphism of the $\Q$-algebra $\Q\otimes_{\Z}\Lambda_N$. It can be checked that for any $g\in\Lambda_N$ we have $p_r\circ g = g \circ p_r$; thus, in particular, $p_r\circ h_m = h_m \circ p_r$.
	
	To define the final ingredient, Schur polynomials, we introduce the antisymmetric polynomials $a_{\beta}$: for a positive integer $N$ and a composition $\beta$ of length $N$ we let $a_{\beta}=\det(x_i^{\beta_j})_{i,j\leq N}$. Given a partition $\lambda$ of length at most $N$, we now define the \textit{Schur polynomial}
	\begin{equation}\label{Eq Schur definition}
	s_{\lambda}(\List{x}{N})=\frac{a_{\lambda+\delta(N)}}{a_{\delta(N)}},
	\end{equation}
	where $\delta(N)=(N-1,N-2,\dots,0)$ and $\lambda+\delta(N)=(\lambda_1+N-1,\lambda_2+N-2,\dots, \lambda_N)$. While compared to other definitions such as \cite[Definition~7.10.1]{StanleyEnumerativeII99}, it is not immediately obvious that Schur polynomials are polynomials, we use this definition as one requires antisymmetric polynomials to use the labelled abaci. 
	
	\begin{example}\label{Example symmetric polynomials}
		Let $N=3$. Then
		\begin{align*}
		h_{2}(x_1,x_2,x_3)&=x_1^2+x_2^2+x_3^2+x_1x_2+x_2x_3+x_1x_3,\\
		p_4(x_1,x_2,x_3)&=x_1^4 + x_2^4 +x_3^4,\\
		p_4\circ h_2(x_1,x_2,x_3) &=x_1^8+x_2^8+x_3^8+x_1^4x_2^4+x_2^4x_3^4+x_1^4x_3^4,\\
		s_{(2,1)}(x_1,x_2,x_3)&=\frac{a_{(4,2,0)}}{a_{(2,1,0)}}=\frac{(x_1^2-x_2^2)(x_2^2-x_3^3)(x_1^2-x_3^2)}{(x_1-x_2)(x_2-x_3)(x_1-x_3)}\\
		&=(x_1+x_2)(x_2+x_3)(x_1+x_3)\\
		&=x_1^2x_2 + x_1^2x_3+x_1x_2^2+x_1x_3^2+x_2^2x_3+x_2x_3^2+2x_1x_2x_3.\\
		\end{align*}
	\end{example}
	
	\subsection{Labelled abacus}\label{Sec abacus}
	
	Most of our terminology and notation for labelled abaci comes from \cite{LoehrAbacus10}. A \textit{labelled abacus with $N$ beads} is a sequence $w=(w_0,w_1,w_2,\dots)$ indexed from $0$ with precisely $N$ non-zero entries, which are $1,2,\dots,N$. For $1\leq B\leq N$, we let $w^{-1}(B)$ be the index $i$ such that $w_i=B$. We write $\iota_1(w)>\iota_2(w)>\dots>\iota_N(w)$ for the indices $i$ such that $w_i$ is non-zero and define the \textit{support} $\supp(w)$ to be $\left\lbrace \iota_i(w) : 1\leq i\leq N \right\rbrace $. Finally, the \textit{sign} $\sgn(w)$ is the sign of the permutation $\sigma_w\in S_N$ given by $\sigma_w(B)=w_{\iota_{B}(w)}$.
	
	\begin{example}\label{Example sign}
		If $w=(5,0,6,4,1,0,0,3,0,2,0,0,\dots)$, a labelled abacus with $6$ beads, then $\sigma_w=(1\; 2\; 3)(5\;6)$. Hence $\sgn(w)=-1$.
	\end{example}
	
	One should imagine that a labelled abacus $w$ consists of a single runner with positions $0,1,2,\dots$, where the position $i$ is empty if $w_i=0$, and is occupied by a bead labelled by $w_i$ otherwise. The value $w^{-1}(B)$ is the position of bead $B$, the support is the set of the non-empty positions and $\iota_t(w)$ is the $t$-th largest occupied position. The permutation of beads in $w$, starting from beads ordered in decreasing order, is then $\sigma_w$. With this in mind, we introduce the following intuitive terminology.
	
	Fix positive integer $r$ and $B\neq C\leq N$ and write $y=w^{-1}(B)$ and $z=w^{-1}(C)$ for the positions of beads $B$ and $C$, respectively. A labelled abacus $w'$ is obtained from $w$ by \textit{swapping beads $B$ and $C$} if $w'_{y}=C$, $w'_{z}=B$ and $w'_i=w_i$ otherwise. Bead $B$ is \textit{$r$-mobile} if $w_{y+r}=0$. If that is the case, a labelled abacus $w'$ is obtained from $w$ by \textit{$r$-moving bead $B$} if $w'_y=0$, $w'_{y+r}=B$ and $w'_i=w_i$ otherwise. If bead $B$ in not $r$-mobile, we say that it \textit{$r$-collides} with bead $w_{y+r}$. Similarly, bead $B$ is \textit{left-$r$-mobile} if $y\geq r$ and $w_{y-r}=0$. If that is the case, a labelled abacus $w'$ is obtained from $w$ by \textit{$r$-moving bead $B$ leftwards} if $w$ is obtained from $w'$ by $r$-moving bead $B$. Finally, for $t\leq N$, the \textit{$t$-th rightmost bead} of $w$ is $\sigma_w(t)$.
	
	It is easy to see how the sign changes when performing the above operations. To state the formula, we define the \textit{number of beads} between positions $i_1<i_2$ as $|\supp(w)\cap \left\lbrace   i_1+1,i_1+2,\dots,i_2-1\right\rbrace   |$.
	
	\begin{lemma}\label{Lemma sign change}
		Let $w$ be a labelled abacus with $N$ beads. Fix $B\leq N$ and write $y=w^{-1}(B)$ for the position of bead $B$.
		\begin{enumerate}[label=\textnormal{(\roman*)}]
			\item If $C\leq N$ and $C\neq B$ and $w'$ is obtained from $w$ by swapping beads $B$ and $C$, then $\sgn(w')=-\sgn(w)$.
			\item If bead $B$ is $r$-mobile for some chosen positive integer $r$ and $w'$ is obtained from $w$ by $r$-moving bead $B$, then $\sgn(w')=(-1)^u\sgn(w)$, where $u$ is the number of beads between $y$ and $y+r$. 
		\end{enumerate}
	\end{lemma}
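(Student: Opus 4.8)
The plan is to compute, in each case, the permutation $\sigma_{w'}$ explicitly in terms of $\sigma_w$ and then read off the sign from the parity of the permutation relating them. Throughout I would use that $\sigma_w(t)$ is the label of the bead occupying the $t$-th largest occupied position $\iota_t(w)$; equivalently, if a bead sits at position $y$, its \emph{rank} $t$ (the value with $\sigma_w(t)$ equal to that bead's label) is one more than the number of occupied positions exceeding $y$.

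For part (i), swapping the labels of beads $B$ and $C$ leaves the support unchanged, so the occupied positions $\iota_1(w)>\dots>\iota_N(w)$, and hence the ranks, coincide for $w$ and $w'$. If $B$ has rank $p$ and $C$ has rank $q$ in $w$, then $\sigma_w(p)=B$ and $\sigma_w(q)=C$, while $\sigma_{w'}(p)=C$, $\sigma_{w'}(q)=B$, and $\sigma_{w'}(k)=\sigma_w(k)$ for every other $k$. Thus $\sigma_{w'}=\sigma_w\circ(p\;q)$, a single transposition, and therefore $\sgn(w')=-\sgn(w)$.

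For part (ii), the support does change: position $y$ empties and position $y+r$ fills, while the label $B$ is preserved. I would first locate $B$: say it has rank $p$ in $w$, so there are $p-1$ occupied positions above $y$. Of these, exactly the $u$ beads lying strictly between $y$ and $y+r$ occupy ranks $p-1,p-2,\dots,p-u$, since they are the occupied positions immediately above $y$ and below the (empty) position $y+r$; the remaining $p-u-1$ occupied positions exceed $y+r$ and keep their ranks $1,\dots,p-u-1$. After the move, bead $B$ at position $y+r$ slots in just below this upper block, acquiring rank $p-u$, and each of the $u$ intermediate beads drops by one rank. A short check confirms that positions below $y$ have the same number of occupied positions above them before and after the move, so they retain ranks $p+1,\dots,N$. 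Reading off the labels then shows that $\sigma_{w'}$ agrees with $\sigma_w$ outside the ranks $\{p-u,\dots,p\}$ and that $\sigma_{w'}=\sigma_w\circ c$, where $c$ is the $(u+1)$-cycle sending $p-u\mapsto p\mapsto p-1\mapsto\cdots\mapsto p-u+1\mapsto p-u$. Since a cycle of length $u+1$ has sign $(-1)^{u}$, this gives $\sgn(w')=(-1)^{u}\sgn(w)$.

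The only genuine bookkeeping is in part (ii): one must verify precisely which ranks are disturbed by the move and that the disturbance is a single cycle of length $u+1$ rather than something more intricate. The two facts that make this work are that the $u$ skipped beads occupy a contiguous block of ranks immediately above $B$'s rank, and that every position outside the interval $[y,y+r]$ has the same number of occupied positions above it in $w$ and in $w'$, so its rank is preserved.
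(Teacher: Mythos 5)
Your proof is correct and follows essentially the same route as the paper, which simply asserts that the permutation relating $\sigma_w$ and $\sigma_{w'}$ is a transposition in (i) and a $(u+1)$-cycle in (ii); you supply the rank-bookkeeping that verifies these assertions. The only cosmetic difference is that you factor on the right ($\sigma_{w'}=\sigma_w\circ c$, a cycle of ranks) while the paper factors on the left ($\sigma_{w'}\sigma_w^{-1}$, a cycle of labels) --- these are conjugate, so the sign computation is identical.
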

	
	\begin{proof}
		In (i), $\sigma_{w'}\sigma_w^{-1}$ is the transposition $(B\; C)$. In (ii), $\sigma_{w'}\sigma_w^{-1}$ is a $\left( u+1\right) $-cycle.
	\end{proof}
	
	For a labelled abacus $w$ with $N$ beads we define its \textit{weight} $\wt(w)$ as the monomial $\prod_{i\in\supp(w)}x_{w_i}^i$. We also define the \textit{shape} $\sh(w)$ to be the partition $(\iota_1(w)-N+1, \iota_2(w)-N+2,\dots, \iota_N(w))$, and given $\lambda\in\Par_{\leq N}$ we write $\Abc_N(\lambda)$ for the set of labelled abaci with $N$ beads and shape $\lambda$. Thus $\Abc_N(\lambda)$ contains $N!$ elements. An example of labelled abaci is in Figure~\ref{Figure labelled abacus}.
	
	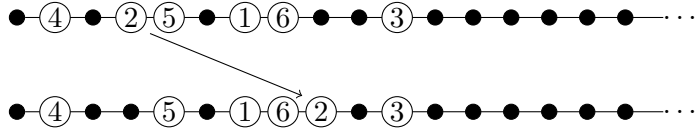
\begin{figure}[ht]
		\begin{tikzpicture}[x=0.5cm, y=0.5cm]
		\begin{pgfonlayer}{nodelayer}
		\node [style=Empty node] (0) at (0.5, 0) {};
		\node [style=Bead] (1) at (1.5, 0) {$4$};
		\node [style=Bead] (2) at (3.5, 0) {$2$};
		\node [style=Bead] (3) at (4.5, 0) {$5$};
		\node [style=Bead] (4) at (6.5, 0) {$1$};
		\node [style=Bead] (5) at (7.5, 0) {$6$};
		\node [style=Bead] (6) at (10.5, 0) {$3$};
		\node [style=Empty node] (7) at (2.5, 0) {};
		\node [style=Empty node] (8) at (8.5, 0) {};
		\node [style=Empty node] (9) at (5.5, 0) {};
		\node [style=Empty node] (10) at (9.5, 0) {};
		\node [style=Empty node] (11) at (11.5, 0) {};
		\node [style=Empty node] (12) at (12.5, 0) {};
		\node [style=None] (13) at (17.5, 0) {};
		\node [style=None] (14) at (18, 0) {$\dots$};
		\node [style=Empty node] (15) at (0.5, -2.5) {};
		\node [style=Bead] (16) at (1.5, -2.5) {$4$};
		\node [style=Bead] (17) at (8.5, -2.5) {$2$};
		\node [style=Bead] (18) at (4.5, -2.5) {$5$};
		\node [style=Bead] (19) at (6.5, -2.5) {$1$};
		\node [style=Bead] (20) at (7.5, -2.5) {$6$};
		\node [style=Bead] (21) at (10.5, -2.5) {$3$};
		\node [style=Empty node] (22) at (2.5, -2.5) {};
		\node [style=Empty node] (23) at (3.5, -2.5) {};
		\node [style=Empty node] (24) at (5.5, -2.5) {};
		\node [style=Empty node] (25) at (9.5, -2.5) {};
		\node [style=Empty node] (26) at (11.5, -2.5) {};
		\node [style=Empty node] (27) at (12.5, -2.5) {};
		\node [style=None] (28) at (17.5, -2.5) {};
		\node [style=None] (29) at (18, -2.5) {$\dots$};
		\node [style=Empty node] (30) at (13.5, 0) {};
		\node [style=Empty node] (31) at (14.5, 0) {};
		\node [style=Empty node] (32) at (13.5, -2.5) {};
		\node [style=Empty node] (33) at (14.5, -2.5) {};
		\node [style=None] (34) at (4, -0.425) {};
		\node [style=None] (35) at (8, -2.075) {};
		\node [style=Empty node] (36) at (15.5, 0) {};
		\node [style=Empty node] (37) at (16.5, 0) {};
		\node [style=Empty node] (38) at (15.5, -2.5) {};
		\node [style=Empty node] (39) at (16.5, -2.5) {};
		\end{pgfonlayer}
		\begin{pgfonlayer}{edgelayer}
		\draw (0) to (13.center);
		\draw (15) to (28.center);
		\draw [style=Move it] (34.center) to (35.center);
		\end{pgfonlayer}
		\end{tikzpicture}
		\caption{The upper labelled abacus with $6$ beads $w$ has support $\left\lbrace 10,7,6,4,3,1 \right\rbrace $. The permutation $\sigma_w$ equals $(1\; 3)(2\; 6\; 4\; 5)$ and thus $\sgn(w)=1$. We have, for instance, $w^{-1}(5)=4$ and $w^{-1}(1)=6$. The weight of $w$ is $x_1^6x_2^3x_3^{10}x_4x_5^{4}x_6^7$ and the shape of $w$ is $(5,3,3,2,2,1)$. Bead $4$ is not $5$-mobile, but the other beads are. By $5$-moving bead $2$ we obtain the lower labelled abacus $w'$. One computes that $\sigma_{w'}\sigma_w^{-1}=(2\; 5\; 1 \; 6)$, which is in accordance with the proof of Lemma~\ref{Lemma sign change}(ii).}
		\label{Figure labelled abacus}
	\end{figure}
	
	The importance of labelled abaci comes from the simple identity
	\begin{equation}\label{Eq antisymmetric}
	a_{\lambda+\delta(N)}=\sum_{w\in\Abc_N(\lambda)}\sgn(w)\wt(w),
	\end{equation}
	which holds for any $\lambda\in\Par_{\leq N}$; see \cite[p.1359]{LoehrAbacus10}. Note that the identity is just the expansion of $a_{\lambda+\delta(N)}=\det(x_i^{\lambda_j+N-j})_{i,j\leq N}$.
	
	\section{Proof of the plethystic Murnaghan--Nakayama rule}\label{Sec proof}
	
	The following is an immediate consequence of a well-known result connecting moves on an (unlabelled) abacus and removals of border strips. 
	
	\begin{lemma}\label{Lemma r-border strips}
		Let $r,t$ and $N$ be positive integers such that $t\leq N$. For $\lambda\in\Par_{\leq N}$ and $w\in\Abc_N(\lambda)$ the following holds:
		\begin{enumerate}[label=\textnormal{(\roman*)}]
			\item There is a bijection $\theta$ between left-$r$-mobile beads of $w$ and $r$-border strips of the form $\lambda/\mu$ given by mapping bead $B$ to an $r$-border strip $\lambda/\mu$, where $\mu$ is the shape of the labelled abacus obtained from $w$ by $r$-moving bead $B$ leftwards.
			\item If $\lambda/\mu$ is an $r$-border strip with top $t$, then $\theta^{-1}(\lambda/\mu)$ is the $t$-th rightmost bead of $w$.
			\item With $\mu$ as in (ii), the number of beads between positions $\iota_t(w)$ and $\iota_t(w)-r$ equals $b(\lambda/\mu)-t(\lambda/\mu)$.
			\item Continuing with the same $\mu$, there is a bijection $\phi:\Abc_N(\lambda)\to\Abc_N(\mu)$ given by $r$-moving the $t$-th rightmost bead leftwards. 
		\end{enumerate} 
	\end{lemma}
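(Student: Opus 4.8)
The plan is to deduce all four parts from the classical correspondence, on the unlabelled (James) abacus, between left-moves of beads by $r$ and removals of $r$-border strips. The key preliminary observation is that the set of occupied positions of a member of $\Abc_N(\lambda)$ depends only on $\lambda$ and not on the labelling: from $\sh(w)=(\iota_1(w)-N+1,\dots,\iota_N(w))=\lambda$ we read off $\iota_i(w)=\lambda_i+N-i$ for every $i$. Hence all of $\Abc_N(\lambda)$ shares one underlying abacus, with beta-set $\{\lambda_i+N-i:1\le i\le N\}$, and bead $B$ at position $y=w^{-1}(B)$ is left-$r$-mobile exactly when $y\ge r$ and $y-r$ is an empty position of this common abacus.

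For (i) I would invoke the well-known result that moving a bead on the abacus of $\lambda$ from an occupied position $y$ to the empty position $y-r$ produces precisely the beta-set of a partition $\mu$ with $\lambda/\mu$ an $r$-border strip, and that this gives a bijection between such admissible left-moves and the $r$-border strips $\lambda/\mu$. Since left-$r$-mobile beads of $w$ correspond obviously to these admissible left-moves, the assignment $\theta$ is a bijection and $\theta(B)=\lambda/\mu$ with $\mu=\sh(w')$, as required.

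For (ii) and (iii) I would make the correspondence explicit via beta numbers. Moving the $t$-th rightmost bead, which sits at position $\iota_t(w)=\lambda_t+N-t$, leftwards to $\iota_t(w)-r$ leaves the $t-1$ larger positions $\iota_1(w),\dots,\iota_{t-1}(w)$ as the $t-1$ largest occupied positions, so the new shape $\mu$ satisfies $\mu_i=\lambda_i$ for $i<t$; moreover the new $t$-th largest position is at most $\max(\iota_{t+1}(w),\iota_t(w)-r)<\iota_t(w)$, whence $\mu_t<\lambda_t$ and $t(\lambda/\mu)=t$. Conversely, since there is at most one $r$-border strip with a given top, and every left-$r$-mobile bead is the $k$-th rightmost for some $k$ and produces a strip of top $k$ by this computation, the bead that $\theta$ sends to a strip of top $t$ must be the $t$-th rightmost bead; this is (ii). Statement (iii) is then the leg-length half of the same standard result: the number of beads strictly between $\iota_t(w)-r$ and $\iota_t(w)$ equals the leg length of the removed strip, which is $b(\lambda/\mu)-t(\lambda/\mu)$ because the strip occupies the consecutive rows $t,\dots,b$. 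I expect this matching of the combinatorial top with the rightmost-rank of the moved bead to demand the most care, since the conventions $\iota_i(w)=\lambda_i+N-i$, the identification of the $t$-th rightmost bead with $\sigma_w(t)$ at position $\iota_t(w)$, and the re-sorting of beta numbers after the move must all be tracked consistently to avoid off-by-one errors.

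Finally, for (iv) I would exploit that the recipe ``$r$-move the $t$-th rightmost bead leftwards'' is purely positional. By (ii) a strip $\lambda/\mu$ with top $t$ exists, so the $t$-th rightmost position of the common abacus has an empty slot $r$ to its left; therefore the $t$-th rightmost bead of every $w\in\Abc_N(\lambda)$ is left-$r$-mobile, and moving it leftwards yields a labelled abacus of shape $\mu$, the same $\mu$ for all $w$ since $\mu$ depends only on $\lambda$, $r$ and $t$. This defines $\phi\colon\Abc_N(\lambda)\to\Abc_N(\mu)$. It is a bijection because it has a two-sided inverse, namely $r$-moving rightwards the relocated bead now occupying position $\iota_t(w)-r$: moving a single labelled bead leftwards and then rightwards by $r$ restores the original abacus and preserves all labels, so $\phi$ is invertible and hence bijective.
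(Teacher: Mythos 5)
Your proposal is correct and takes essentially the same route as the paper: part (i) is delegated to the classical unlabelled-abacus result of James--Kerber, and (ii)--(iv) are then obtained by tracking how the sorted occupied positions $\iota_i(w)=\lambda_i+N-i$ change under the move, using that all of $\Abc_N(\lambda)$ shares one underlying unlabelled abacus. The only minor difference is that for (iii) you cite the standard leg-length half of that classical correspondence, whereas the paper derives it in one line by noting that the largest index in which $\sh(w)$ and $\sh(w')$ differ is the rank of the moved bead in $w'$; both are legitimate.
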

	
	\begin{proof}
		Part (i) (without labels) is \cite[Lemma~2.7.13]{JamesKerberSymmetric81}. Now suppose that $w'$ is obtained from $w$ by $r$-moving bead $B$ leftwards. If $j$ is the least index in which $\sh(w)$ and $\sh(w')$ differ, then bead $B$ is the $j$-th rightmost bead of $w$. Similarly, if $j$ is the largest such index, then bead $B$ is the $j$-th rightmost bead of $w'$. Thus we deduce (ii) and (iii). Finally, (iv) follows from (i) and (ii).
	\end{proof}
	
	\begin{example}\label{Example moving a bead}
		Let $w$ be the lower labelled abacus from Figure~\ref{Figure labelled abacus} and $\lambda=\sh(w)=(5,4,4,4,3,1)$. Since the second rightmost bead of $w$ is left-$5$-mobile, there is a corresponding $5$-border strip $\lambda/\mu$ with top $2$. Indeed, this is the $5$-border strip labelled by $1$ from Figure~\ref{Figure border strips}. The bijection from Lemma~\ref{Lemma r-border strips}(iv) then pairs the labelled abaci in Figure~\ref{Figure labelled abacus}.
	\end{example}
	
	The next step is to iterate the previous lemma. To do this, for positive integers $r, m, N$ and $\lambda,\mu\in\Par_{\leq N}$, we define a set $K_N^{r,m}(\mu,\lambda)$ as the set of sequences $(w^{(0)},w^{(1)},\dots,w^{(m)})$ of labelled abaci with $N$ beads such that $\sh(w^{(0)})=\mu$, $\sh(w^{(m)})=\lambda$, for all $1\leq j\leq m$ the labelled abacus $w^{(j)}$ is obtained from $w^{(j-1)}$ by $r$-moving a bead, say bead $w^{(j-1)}_{i_j}$, and the inequalities $i_1<i_2<\dots<i_m$ hold. We refer the reader to Figure~\ref{Figure process} for a diagrammatic example of two such sequences.
	
	\begin{lemma}\label{Lemma r-decomposable}
		Let $r, m$ and $N$ be positive integers and $\lambda,\mu\in\Par_{\leq N}$.
		\begin{enumerate}[label=\textnormal{(\roman*)}]
			\item The set $K_N^{r,m}(\mu,\lambda)$ is empty unless $\lambda/\mu$ is an $r$-decomposable skew partition of size $rm$.
			\item If $\lambda/\mu$ is an $r$-decomposable skew partition of size $rm$, then the map $(w^{(0)},w^{(1)},\dots,w^{(m)})\mapsto w^{(m)}$ is a bijection from $K_N^{r,m}(\mu,\lambda)$ to $\Abc_N(\lambda)$.
			\item For $(w^{(0)},w^{(1)},\dots,w^{(m)})\in K_N^{r,m}(\mu,\lambda)$ we have that $\sgn(w^{(m)}) = \sgn_r(\lambda/\mu)\sgn(w^{(0)})$.
		\end{enumerate}
	\end{lemma}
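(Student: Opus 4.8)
The plan is to reduce all three parts to one structural fact: each rightward $r$-move adds an $r$-border strip to the shape, and the ordering condition $i_1<i_2<\dots<i_m$ on the positions is \emph{equivalent} to the requirement that the tops of these strips be non-increasing. Throughout I write $\gamma^{(j)}=\sh(w^{(j)})$ and let $t_j=t(\gamma^{(j)}/\gamma^{(j-1)})$. Since $w^{(j)}$ is obtained from $w^{(j-1)}$ by $r$-moving a bead rightward, $w^{(j-1)}$ is obtained from $w^{(j)}$ by $r$-moving a bead leftward, so Lemma~\ref{Lemma r-border strips}(i) shows each $\gamma^{(j)}/\gamma^{(j-1)}$ is an $r$-border strip; hence the chain $\mu=\gamma^{(0)}\subseteq\dots\subseteq\gamma^{(m)}=\lambda$ has the shape of (\ref{Eq chain}) with $d=m$ and $|\lambda|-|\mu|=rm$. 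By Lemma~\ref{Lemma r-border strips}(ii) the bead moved at step $j$ is, in $w^{(j)}$, the $t_j$-th rightmost bead, so $\iota_{t_j}(w^{(j)})=i_j+r$ and the number of beads of $w^{(j)}$ at positions greater than $i_j+r$ is exactly $t_j-1$.

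The heart of the argument, and the step I expect to be the main obstacle, is the equivalence $i_j<i_{j+1}\iff t_j\geq t_{j+1}$. Because the move from $w^{(j)}$ to $w^{(j+1)}$ alters only the positions $i_{j+1}$ and $i_{j+1}+r$, neither exceeding $i_{j+1}+r$, the beads lying at positions greater than $i_{j+1}+r$ are identical in $w^{(j)}$ and $w^{(j+1)}$; thus $t_{j+1}-1$ equals the number of beads of $w^{(j)}$ above $i_{j+1}+r$. If $i_{j+1}>i_j$, then $i_{j+1}+r>i_j+r$ and monotonicity of this count in the threshold yields $t_{j+1}\leq t_j$. Conversely, if $i_{j+1}<i_j$, then the $t_j$-th rightmost bead of $w^{(j)}$, sitting at $i_j+r>i_{j+1}+r$, is not the bead moved at step $j+1$ and so survives into $w^{(j+1)}$, forcing at least $t_j$ beads above $i_{j+1}+r$ and hence $t_{j+1}>t_j$; moreover $i_{j+1}=i_j$ is impossible, as position $i_j$ is vacated at step $j$. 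Applying this to consecutive indices, $i_1<\dots<i_m$ holds precisely when $t_1\geq\dots\geq t_m$, which is exactly the condition that the chain be an $r$-decomposition of $\lambda/\mu$. This proves (i).

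For (ii) I would construct the inverse map using the uniqueness of the $r$-decomposition. Given $w^{(m)}\in\Abc_N(\lambda)$, take $\gamma^{(0)}\subseteq\dots\subseteq\gamma^{(m)}$ to be the unique $r$-decomposition of $\lambda/\mu$ and define $w^{(j-1)}$ by $r$-moving leftward the $t_j$-th rightmost bead of $w^{(j)}$; Lemma~\ref{Lemma r-border strips}(ii),(iv) guarantees this bead is left-$r$-mobile and that $\sh(w^{(j-1)})=\gamma^{(j-1)}$, while the equivalence above shows the resulting sequence has strictly increasing positions and so lies in $K_N^{r,m}(\mu,\lambda)$. This construction is forced, since by part (i) the shapes of any preimage must form the unique $r$-decomposition and each removed bead is determined by Lemma~\ref{Lemma r-border strips}(iv); hence it is a two-sided inverse of $(w^{(0)},\dots,w^{(m)})\mapsto w^{(m)}$, giving the bijection.

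Finally (iii) is a telescoping sign computation. Lemma~\ref{Lemma sign change}(ii) gives $\sgn(w^{(j)})=(-1)^{u_j}\sgn(w^{(j-1)})$, where $u_j$ is the number of beads between $i_j$ and $i_j+r$; this count is unchanged by the move, so by Lemma~\ref{Lemma r-border strips}(iii) it equals $b(\gamma^{(j)}/\gamma^{(j-1)})-t_j$, whence $(-1)^{u_j}=\sgn(\gamma^{(j)}/\gamma^{(j-1)})$. Multiplying over $j=1,\dots,m$ and using the definition of $\sgn_r$ yields $\sgn(w^{(m)})=\sgn_r(\lambda/\mu)\sgn(w^{(0)})$.
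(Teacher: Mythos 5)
Your proposal is correct and takes essentially the same route as the paper's own proof: each $r$-move contributes an $r$-border strip, the ordering $i_1<i_2<\dots<i_m$ is shown equivalent to non-increasing tops via Lemma~\ref{Lemma r-border strips}(ii), part (ii) follows from uniqueness of the $r$-decomposition together with Lemma~\ref{Lemma r-border strips}(iv), and part (iii) by telescoping Lemma~\ref{Lemma sign change}(ii) against Lemma~\ref{Lemma r-border strips}(iii). The only difference is that you supply a full bead-counting argument for the equivalence that the paper simply records as its `key observation', which is an elaboration rather than a divergence.
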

	
	\begin{proof}
		For any sequence of labelled abaci $(w^{(0)},w^{(1)},\dots,w^{(m)})$ let $\gamma^{(j)}=\sh(w^{(j)})$. From Lemma~\ref{Lemma r-border strips}(i), the condition that $w^{(j)}$ is obtained from $w^{(j-1)}$ by $r$-moving a bead, say bead $w^{(j-1)}_{i_j}$, implies that $\gamma^{(j)}/\gamma^{(j-1)}$ is an $r$-border strip. Suppose that this is the case for all $1\leq j\leq m$. Let bead $w^{(j-1)}_{i_j}$ be the $t_j$-th rightmost bead of $w^{(j)}$. The key observation is that $i_1<i_2<\dots<i_m$ if and only if $t_1\geq t_2\geq\dots\geq t_m$, which, by Lemma~\ref{Lemma r-border strips}(ii), is equivalent to $t(\gamma^{(1)}/\gamma^{(0)})\geq t(\gamma^{(2)}/\gamma^{(1)})\geq\dots\geq t(\gamma^{(m)}/\gamma^{(m-1)})$.
		
		Hence if $(w^{(0)},w^{(1)},\dots,w^{(m)})$ lies in $K_N^{r,m}(\mu,\lambda)$, then $\mu=\gamma^{(0)}\subseteq\gamma^{(1)}\subseteq\dots\subseteq \gamma^{(m)}=\lambda$ is a chain witnessing that $\lambda/\mu$ is $r$-decomposable, as in (\ref{Eq chain}); thus (i) is proven. Moreover, since the chain (\ref{Eq chain}) is unique, there is a unique choice of shapes of the labelled abaci in any sequence $(w^{(0)},w^{(1)},\dots,w^{(m)})\in K_N^{r,m}(\mu,\lambda)$. We can now apply Lemma~\ref{Lemma r-border strips}(iv) $m$-times to obtain (ii). Finally, Lemma~\ref{Lemma sign change}(ii) and Lemma~\ref{Lemma r-border strips}(iii) show that if $(w^{(0)},w^{(1)},\dots,w^{(m)})\in K_N^{r,m}(\mu,\lambda)$, then $\sgn(w^{(j)})= \sgn(\gamma^{(j)}/\gamma^{(j-1)}) \sgn(w^{(j-1)})$ for all $1\leq j\leq m$. Multiplying these equalities, we obtain (iii).
	\end{proof}
	
	We can rephrase this result to obtain a characterisation of $r$-decomposable partitions. In the statement, one should bear in mind that in (ii) and (iii) the $r$-moves are made consecutively, and thus a bead may $r$-move multiple times. 
	
	\begin{corollary}\label{Cor r-decomposable}
		Let $r$ and $N$ be positive integers and let $\lambda,\mu\in\Par_{\leq N}$. The following are equivalent:
		\begin{enumerate}[label=\textnormal{(\roman*)}]
			\item $\lambda/\mu$ is an $r$-decomposable skew partition.
			\item There are labelled abaci $w\in\Abc_N(\mu)$ and $w'\in\Abc_N(\lambda)$ such that $w'$ is obtained from $w$ by a series of $r$-moves of beads from positions $\List{i}{m}$, where $i_1<i_2<\dots<i_m$.
			\item For each $w'\in\Abc_N(\lambda)$ there exists $w\in\Abc_N(\mu)$ such that $w'$ is obtained from $w$ by a series of $r$-moves of beads from positions $\List{i}{m}$, where $i_1<i_2<\dots<i_m$.
		\end{enumerate}
		Moreover, if (i)--(iii) hold true, then the choice of $w$ and the series of $r$-moves in (iii) is unique, $|\lambda|=|\mu|+mr$ where $m$ is the number of $r$ moves in (ii) and (iii) and $\sgn(w')=\sgn_r(\lambda/\mu)\sgn(w)$.
	\end{corollary}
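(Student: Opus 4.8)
The plan is to recognise Corollary~\ref{Cor r-decomposable} as a direct repackaging of Lemma~\ref{Lemma r-decomposable}, so that almost all of the substantive work is already done and what remains is a careful translation between the two languages. The crucial observation, which I would state explicitly at the outset, is that a series of $r$-moves of beads from positions $i_1<i_2<\dots<i_m$ carrying $w$ to $w'$ is exactly the data of a sequence $(w^{(0)},w^{(1)},\dots,w^{(m)})$ with $w^{(0)}=w$ and $w^{(m)}=w'$ in which $w^{(j)}$ arises from $w^{(j-1)}$ by $r$-moving the bead in position $i_j$. Hence, writing $\mu=\sh(w)$ and $\lambda=\sh(w')$, such a series exists precisely when $K_N^{r,m}(\mu,\lambda)$ contains an element whose first term is $w$ and whose last term is $w'$. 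This is the only place where the remark preceding the corollary matters: because the moves are made consecutively, a bead may move more than once, but this is already accommodated by the step-by-step construction of $K_N^{r,m}(\mu,\lambda)$.

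First I would pin down $m$ and the size identity. If $\lambda/\mu$ is $r$-decomposable, its chain \eqref{Eq chain} consists of $d$ consecutive $r$-border strips, each of size $r$, so $|\lambda|-|\mu|=rd$; conversely, a single $r$-move sends a bead from position $i$ to position $i+r$ and so raises $\sum_t\iota_t(w)$, and hence $|\sh(w)|$, by exactly $r$. Thus any series of $r$-moves from $\mu$ to $\lambda$ uses $m=(|\lambda|-|\mu|)/r$ moves, $m$ is forced, and the identity $|\lambda|=|\mu|+mr$ holds in all three situations. I would record this once, so that the number $m$ appearing in (ii) and (iii) is unambiguous.

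Then I would close a cycle of implications. For (ii)$\Rightarrow$(i): the existence of $w$ and $w'$ makes $K_N^{r,m}(\mu,\lambda)$ non-empty, so Lemma~\ref{Lemma r-decomposable}(i) forces $\lambda/\mu$ to be an $r$-decomposable skew partition of size $rm$. For (i)$\Rightarrow$(iii): knowing $\lambda/\mu$ is $r$-decomposable, Lemma~\ref{Lemma r-decomposable}(ii) says the last-term map $K_N^{r,m}(\mu,\lambda)\to\Abc_N(\lambda)$ is a bijection; inverting it assigns to each $w'\in\Abc_N(\lambda)$ a unique sequence, hence a unique $w=w^{(0)}\in\Abc_N(\mu)$ together with a unique series of $r$-moves. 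This yields (iii) and, simultaneously, the uniqueness asserted in the concluding clause. For (iii)$\Rightarrow$(ii): since $\Abc_N(\lambda)$ is non-empty, having $N!$ elements, applying (iii) to any chosen $w'$ produces the pair required by (ii).

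Finally, for the sign identity I would, assuming (i)--(iii), apply Lemma~\ref{Lemma r-decomposable}(iii) to the unique sequence with $w^{(0)}=w$ and $w^{(m)}=w'$, obtaining $\sgn(w')=\sgn_r(\lambda/\mu)\sgn(w)$ at once. I do not expect a genuine obstacle: the whole argument is a translation supported by Lemma~\ref{Lemma r-decomposable}. The only point requiring care is the opening dictionary, namely checking that the strictly increasing condition $i_1<i_2<\dots<i_m$ in the corollary matches verbatim the defining condition of $K_N^{r,m}(\mu,\lambda)$ and that consecutive moves of a possibly repeatedly-moved bead are correctly captured by the successive terms $w^{(0)},\dots,w^{(m)}$.
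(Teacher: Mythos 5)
Your proposal is correct and matches the paper's intent exactly: the paper offers no separate proof of Corollary~\ref{Cor r-decomposable}, presenting it as a direct rephrasing of Lemma~\ref{Lemma r-decomposable}, which is precisely the translation you carry out (identifying a series of $r$-moves with an element of $K_N^{r,m}(\mu,\lambda)$, pinning down $m$ via $|\lambda|=|\mu|+rm$, and reading off existence, uniqueness and the sign identity from parts (i)--(iii) of the lemma). Your explicit handling of why $m$ is forced and why $\Abc_N(\lambda)\neq\emptyset$ fills in details the paper leaves implicit, but the route is the same.
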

	
	We are now ready to prove the main theorem.
	
	\begin{proof}[Proof of Theorem~\ref{Theorem pMN rule}]
		Fix a partition $\mu$ and positive integers $r,m$ and $N$ such that $N\geq|\mu|+rm$. Using the definition of Schur polynomials by antisymmetric polynomials in (\ref{Eq Schur definition}), our desired equality (in $N$ variables) becomes
		\begin{equation}\label{Eq anti pMN rule}
		a_{\mu+\delta(N)}\left( p_r\circ h_m(\List{x}{N})\right) =\sum_{\mu\subseteq\lambda\in\Par(|\mu|+rm)}\sgn_r(\lambda/\mu) a_{\lambda+\delta(N)}.
		\end{equation}
		Using (\ref{Eq antisymmetric}) and the definition of the plethysm $p_r\circ h_m$, we can expand the left-hand side as
		\begin{equation}\label{Eq LHS expansion}
		\sum_{\substack{w\in\Abc_N(\mu) \\ \beta\in\Com_{N}(m)}}\sgn(w)\wt(w)x^{r \beta}=\sum_{\substack{w\in\Abc_N(\mu) \\ \beta\in\Com_{N}(m)}}\sgn(w,\beta)\wt_r(w,\beta),
		\end{equation}
		where the \textit{weight} $\wt_r(w,\beta)$ equals $\wt(w)x^{r \beta}$ and the \textit{sign} $\sgn(w,\beta)$ is just $\sgn(w)$.
		
		Given a labelled abacus $w\in\Abc_N(\mu)$ and a composition $\beta\in\Com_{N}(m)$, we consider a process on $w$ in which we read $w$ from left and every time we see bead $B$ with $\beta_B\geq 1$ we attempt to $r$-move it, provided that we have not already $r$-moved it $\beta_B$-times.
		
		In more detail, we use $v$ and $\alpha$ to denote the current labelled abacus and composition, respectively, during the process. At the start, we set $v=w$ and $\alpha=\beta$. For $i=0,1,\dots$ we look at $B=v_i$. If it is zero, we move to the next $i$. Otherwise, we look at $\alpha_B$. If it is zero, we move to the next $i$. Otherwise, we check whether bead $B$ is $r$-mobile. If it is not, we terminate the process and say that the pair $(w,\beta)$ is \textit{unsuccessful}. If it is $r$-mobile, we update $\alpha$ by decreasing $\alpha_{B}$ by $1$ and also update $v$ by $r$-moving bead $B$. After the updates, if $\alpha$ is the zero sequence we terminate the process and say that the pair $(w,\beta)$ is \textit{successful}. Otherwise, we move to the next $i$, working, of course, with the updated $v$ and $\alpha$ (thus the next bead we attempt to $r$-move may be the same bead, though, it does not have to). See Figure~\ref{Figure process} for an example.
		
		\begin{figure}[ht]
			\begin{tikzpicture}[x=0.5cm, y=0.5cm]
			\begin{pgfonlayer}{nodelayer}
			\node [style=Empty node] (0) at (0.5, 0) {};
			\node [style=Bead] (1) at (1.5, 0) {$4$};
			\node [style=Bead] (2) at (3.5, 0) {$2$};
			\node [style=Bead] (3) at (4.5, 0) {$5$};
			\node [style=Bead] (4) at (6.5, 0) {$1$};
			\node [style=Bead] (5) at (7.5, 0) {$6$};
			\node [style=Bead] (6) at (10.5, 0) {$3$};
			\node [style=Empty node] (7) at (2.5, 0) {};
			\node [style=Empty node] (8) at (8.5, 0) {};
			\node [style=Empty node] (9) at (5.5, 0) {};
			\node [style=Empty node] (10) at (9.5, 0) {};
			\node [style=Empty node] (11) at (11.5, 0) {};
			\node [style=Empty node] (12) at (12.5, 0) {};
			\node [style=None] (13) at (17.5, 0) {};
			\node [style=None] (14) at (18, 0) {$\dots$};
			\node [style=Empty node] (15) at (0.5, -2.5) {};
			\node [style=Bead] (16) at (1.5, -2.5) {$4$};
			\node [style=Bead] (17) at (8.5, -2.5) {$2$};
			\node [style=Bead] (18) at (4.5, -2.5) {$5$};
			\node [style=Bead] (19) at (6.5, -2.5) {$1$};
			\node [style=Bead] (20) at (7.5, -2.5) {$6$};
			\node [style=Bead] (21) at (10.5, -2.5) {$3$};
			\node [style=Empty node] (22) at (2.5, -2.5) {};
			\node [style=Empty node] (23) at (3.5, -2.5) {};
			\node [style=Empty node] (24) at (5.5, -2.5) {};
			\node [style=Empty node] (25) at (9.5, -2.5) {};
			\node [style=Empty node] (26) at (11.5, -2.5) {};
			\node [style=Empty node] (27) at (12.5, -2.5) {};
			\node [style=None] (28) at (17.5, -2.5) {};
			\node [style=None] (29) at (18, -2.5) {$\dots$};
			\node [style=Empty node] (30) at (13.5, 0) {};
			\node [style=Empty node] (31) at (14.5, 0) {};
			\node [style=Empty node] (32) at (13.5, -2.5) {};
			\node [style=Empty node] (33) at (14.5, -2.5) {};
			\node [style=Empty node] (36) at (0.5, -5) {};
			\node [style=Bead] (37) at (1.5, -5) {$4$};
			\node [style=Bead] (38) at (8.5, -5) {$2$};
			\node [style=Bead] (39) at (9.5, -5) {$5$};
			\node [style=Bead] (40) at (6.5, -5) {$1$};
			\node [style=Bead] (41) at (7.5, -5) {$6$};
			\node [style=Bead] (42) at (10.5, -5) {$3$};
			\node [style=Empty node] (43) at (2.5, -5) {};
			\node [style=Empty node] (44) at (3.5, -5) {};
			\node [style=Empty node] (45) at (5.5, -5) {};
			\node [style=Empty node] (46) at (4.5, -5) {};
			\node [style=Empty node] (47) at (11.5, -5) {};
			\node [style=Empty node] (48) at (12.5, -5) {};
			\node [style=None] (49) at (17.5, -5) {};
			\node [style=None] (50) at (18, -5) {$\dots$};
			\node [style=Empty node] (51) at (0.5, -7.5) {};
			\node [style=Bead] (52) at (1.5, -7.5) {$4$};
			\node [style=Bead] (53) at (13.5, -7.5) {$2$};
			\node [style=Bead] (54) at (9.5, -7.5) {$5$};
			\node [style=Bead] (55) at (6.5, -7.5) {$1$};
			\node [style=Bead] (56) at (7.5, -7.5) {$6$};
			\node [style=Bead] (57) at (10.5, -7.5) {$3$};
			\node [style=Empty node] (58) at (2.5, -7.5) {};
			\node [style=Empty node] (59) at (3.5, -7.5) {};
			\node [style=Empty node] (60) at (5.5, -7.5) {};
			\node [style=Empty node] (61) at (4.5, -7.5) {};
			\node [style=Empty node] (62) at (11.5, -7.5) {};
			\node [style=Empty node] (63) at (12.5, -7.5) {};
			\node [style=None] (64) at (17.5, -7.5) {};
			\node [style=None] (65) at (18, -7.5) {$\dots$};
			\node [style=Empty node] (66) at (13.5, -5) {};
			\node [style=Empty node] (67) at (14.5, -5) {};
			\node [style=Empty node] (68) at (8.5, -7.5) {};
			\node [style=Empty node] (69) at (14.5, -7.5) {};
			\node [style=None] (70) at (4, -0.425) {};
			\node [style=None] (71) at (8, -2.075) {};
			\node [style=None] (72) at (5, -2.925) {};
			\node [style=None] (73) at (9, -4.575) {};
			\node [style=None] (74) at (9, -5.425) {};
			\node [style=None] (75) at (13, -7.075) {};
			\node [style=None] (76) at (-3, 0) {$(0,2,0,0,1,0)$};
			\node [style=None] (77) at (-3, -2.5) {$(0,1,0,0,1,0)$};
			\node [style=None] (78) at (-3, -5) {$(0,1,0,0,0,0)$};
			\node [style=None] (79) at (-3, -7.5) {$(0,0,0,0,0,0)$};
			\node [style=Empty node] (80) at (0.5, -11) {};
			\node [style=Bead] (81) at (1.5, -11) {$4$};
			\node [style=Bead] (82) at (3.5, -11) {$2$};
			\node [style=Bead] (83) at (4.5, -11) {$5$};
			\node [style=Bead] (84) at (6.5, -11) {$1$};
			\node [style=Bead] (85) at (7.5, -11) {$6$};
			\node [style=Bead] (86) at (10.5, -11) {$3$};
			\node [style=Empty node] (87) at (2.5, -11) {};
			\node [style=Empty node] (88) at (8.5, -11) {};
			\node [style=Empty node] (89) at (5.5, -11) {};
			\node [style=Empty node] (90) at (9.5, -11) {};
			\node [style=Empty node] (91) at (11.5, -11) {};
			\node [style=Empty node] (92) at (12.5, -11) {};
			\node [style=None] (93) at (17.5, -11) {};
			\node [style=None] (94) at (18, -11) {$\dots$};
			\node [style=Empty node] (95) at (0.5, -13.5) {};
			\node [style=Bead] (96) at (1.5, -13.5) {$4$};
			\node [style=Bead] (97) at (8.5, -13.5) {$2$};
			\node [style=Bead] (98) at (4.5, -13.5) {$5$};
			\node [style=Bead] (99) at (6.5, -13.5) {$1$};
			\node [style=Bead] (100) at (7.5, -13.5) {$6$};
			\node [style=Bead] (101) at (10.5, -13.5) {$3$};
			\node [style=Empty node] (102) at (2.5, -13.5) {};
			\node [style=Empty node] (103) at (3.5, -13.5) {};
			\node [style=Empty node] (104) at (5.5, -13.5) {};
			\node [style=Empty node] (105) at (9.5, -13.5) {};
			\node [style=Empty node] (106) at (11.5, -13.5) {};
			\node [style=Empty node] (107) at (12.5, -13.5) {};
			\node [style=None] (108) at (17.5, -13.5) {};
			\node [style=None] (109) at (18, -13.5) {$\dots$};
			\node [style=Empty node] (110) at (13.5, -11) {};
			\node [style=Empty node] (111) at (14.5, -11) {};
			\node [style=Empty node] (112) at (13.5, -13.5) {};
			\node [style=Empty node] (113) at (14.5, -13.5) {};
			\node [style=Empty node] (114) at (0.5, -16) {};
			\node [style=Bead] (115) at (1.5, -16) {$4$};
			\node [style=Bead] (116) at (13.5, -16) {$2$};
			\node [style=Bead] (117) at (4.5, -16) {$5$};
			\node [style=Bead] (118) at (6.5, -16) {$1$};
			\node [style=Bead] (119) at (7.5, -16) {$6$};
			\node [style=Bead] (120) at (10.5, -16) {$3$};
			\node [style=Empty node] (121) at (2.5, -16) {};
			\node [style=Empty node] (122) at (3.5, -16) {};
			\node [style=Empty node] (123) at (5.5, -16) {};
			\node [style=Empty node] (124) at (9.5, -16) {};
			\node [style=Empty node] (125) at (11.5, -16) {};
			\node [style=Empty node] (126) at (12.5, -16) {};
			\node [style=None] (127) at (17.5, -16) {};
			\node [style=None] (128) at (18, -16) {$\dots$};
			\node [style=Empty node] (129) at (0.5, -18.5) {};
			\node [style=Bead] (130) at (1.5, -18.5) {$4$};
			\node [style=Bead] (131) at (13.5, -18.5) {$2$};
			\node [style=Bead] (132) at (4.5, -18.5) {$5$};
			\node [style=Bead] (133) at (6.5, -18.5) {$1$};
			\node [style=Bead] (134) at (7.5, -18.5) {$6$};
			\node [style=Bead] (135) at (15.5, -18.5) {$3$};
			\node [style=Empty node] (136) at (2.5, -18.5) {};
			\node [style=Empty node] (137) at (3.5, -18.5) {};
			\node [style=Empty node] (138) at (5.5, -18.5) {};
			\node [style=Empty node] (139) at (9.5, -18.5) {};
			\node [style=Empty node] (140) at (11.5, -18.5) {};
			\node [style=Empty node] (141) at (12.5, -18.5) {};
			\node [style=None] (142) at (17.5, -18.5) {};
			\node [style=None] (143) at (18, -18.5) {$\dots$};
			\node [style=Empty node] (144) at (8.5, -16) {};
			\node [style=Empty node] (145) at (14.5, -16) {};
			\node [style=Empty node] (146) at (8.5, -18.5) {};
			\node [style=Empty node] (147) at (14.5, -18.5) {};
			\node [style=None] (148) at (4, -11.425) {};
			\node [style=None] (149) at (8, -13.075) {};
			\node [style=None] (150) at (9, -13.925) {};
			\node [style=None] (151) at (13, -15.575) {};
			\node [style=None] (152) at (11, -16.425) {};
			\node [style=None] (153) at (15, -18.075) {};
			\node [style=None] (154) at (-3, -11) {$(0,2,1,0,0,0)$};
			\node [style=None] (155) at (-3, -13.5) {$(0,1,1,0,0,0)$};
			\node [style=None] (156) at (-3, -16) {$(0,0,1,0,0,0)$};
			\node [style=None] (157) at (-3, -18.5) {$(0,0,0,0,0,0)$};
			\node [style=Empty node] (158) at (15.5, -11) {};
			\node [style=Empty node] (159) at (16.5, -11) {};
			\node [style=Empty node] (160) at (15.5, -13.5) {};
			\node [style=Empty node] (161) at (16.5, -13.5) {};
			\node [style=Empty node] (162) at (15.5, -16) {};
			\node [style=Empty node] (163) at (16.5, -16) {};
			\node [style=Empty node] (164) at (10.5, -18.5) {};
			\node [style=Empty node] (165) at (16.5, -18.5) {};
			\node [style=None] (166) at (-6, -9.25) {};
			\node [style=None] (167) at (18.5, -9.25) {};
			\node [style=Empty node] (168) at (15.5, 0) {};
			\node [style=Empty node] (169) at (16.5, 0) {};
			\node [style=Empty node] (170) at (15.5, -2.5) {};
			\node [style=Empty node] (171) at (16.5, -2.5) {};
			\node [style=Empty node] (172) at (15.5, -5) {};
			\node [style=Empty node] (173) at (16.5, -5) {};
			\node [style=Empty node] (174) at (15.5, -7.5) {};
			\node [style=Empty node] (175) at (16.5, -7.5) {};
			\end{pgfonlayer}
			\begin{pgfonlayer}{edgelayer}
			\draw (0) to (13.center);
			\draw (15) to (28.center);
			\draw (36) to (49.center);
			\draw (51) to (64.center);
			\draw [style=Move it] (70.center) to (71.center);
			\draw [style=Move it] (72.center) to (73.center);
			\draw [style=Move it] (74.center) to (75.center);
			\draw (80) to (93.center);
			\draw (95) to (108.center);
			\draw (114) to (127.center);
			\draw (129) to (142.center);
			\draw [style=Move it] (148.center) to (149.center);
			\draw [style=Move it] (150.center) to (151.center);
			\draw [style=Move it] (152.center) to (153.center);
			\draw [style=Extra box] (166.center) to (167.center);
			\end{pgfonlayer}
			\end{tikzpicture}
			\caption{For $N=6$, $r=5$, $m=3$ and $\mu=(5,3,3,2,2,1)$, the diagrams above the dashed line show all the values of $\alpha$ and $v$ in the process with the initial labelled abacus $w=(0,4,0,2,5,0,1,6,0,0,3,0,0,\dots)$ and the initial composition $\beta=(0,2,0,0,1,0)$. The shapes of these labelled abaci are the partitions $\gamma^{(i)}$ from Figure~\ref{Figure border strips}. If we use the initial composition $\beta=(0,2,1,0,0,0)$ instead, we obtain the diagrams below the dashed line. Compared to the previous diagrams, the first two $5$-moves are both with bead $2$. If we change the initial composition once more, this time to $\beta=(0,2,0,1,0,0)$, the process terminates when we reach $i=1$ as bead $4$ is not $5$-mobile.}
			\label{Figure process}
		\end{figure}
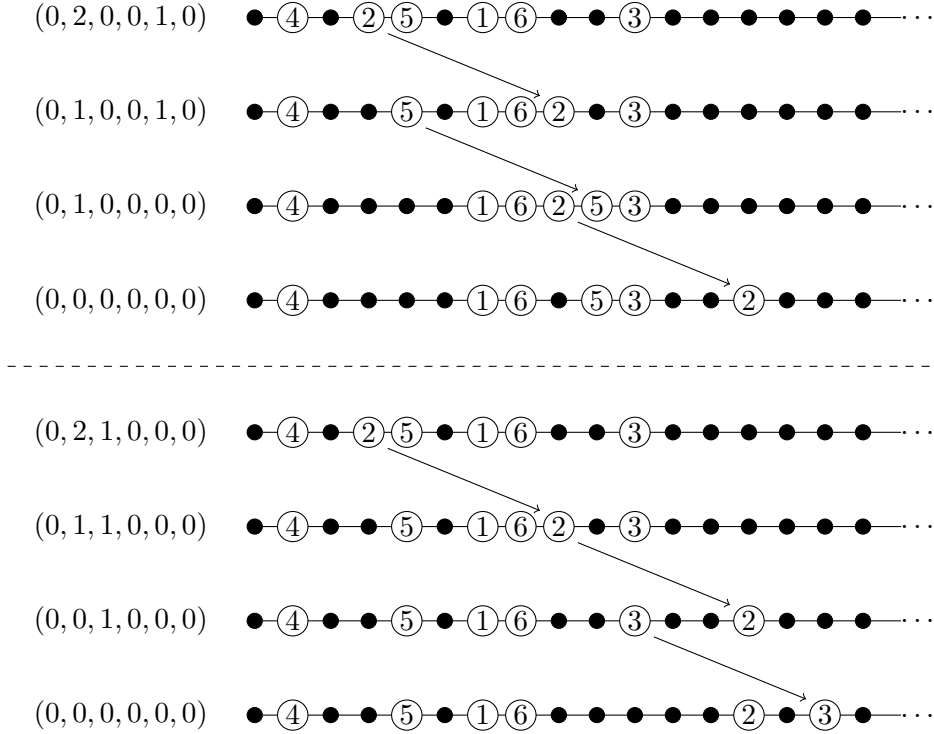
		
		The process always terminates as when we look at position $i$, the beads which are yet to be $r$-moved (that is beads $C$ such that $\alpha_C\geq 1$) lie on positions greater or equal to $i$. We write $I$ and $J$ for the set of pairs $(w,\beta)$ which are unsuccessful and successful, respectively. For $(w,\beta)\in I$, write $B$ for the label of the non-$r$-mobile bead which terminated the process and $C$ for the label of the bead that bead $B$ $r$-collided with. We define a labelled abacus $w'$ to be obtained from $w$ by swapping beads $B$ and $C$. We also define a sequence $\beta'$ of length $N$ by
		\begin{align}\label{Al beta}
		\beta'_j=\begin{cases*}
		\beta_B-\frac{w^{-1}(C) - w^{-1}(B)}{r} & if $j=B$,\\
		\beta_C+\frac{w^{-1}(C) - w^{-1}(B)}{r} & if $j=C$,\\
		\beta_j & otherwise.
		\end{cases*}
		\end{align}
		We then define $\epsilon(w,\beta)$ as $(w',\beta')$. For $(w,\beta)\in J$, we define a labelled abacus $\psi(w,\beta)$, which is the labelled abacus $v$ at the end of the process. See Figure~\ref{Figure maps from I and J} for an example.
		
		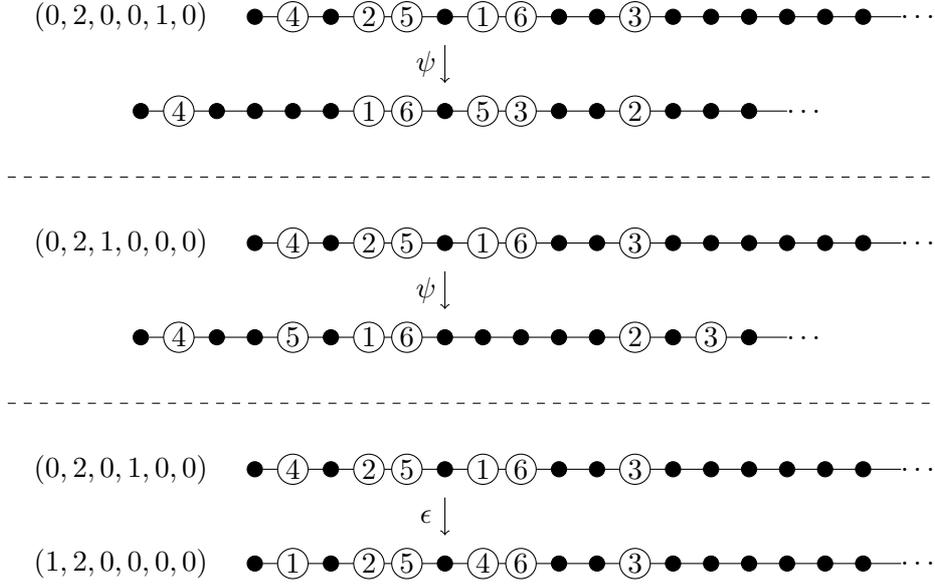
\begin{figure}[ht]
			\begin{tikzpicture}[x=0.5cm, y=0.5cm]
			\begin{pgfonlayer}{nodelayer}
			\node [style=Empty node] (0) at (4, 0) {};
			\node [style=Bead] (1) at (5, 0) {$4$};
			\node [style=Bead] (2) at (7, 0) {$2$};
			\node [style=Bead] (3) at (8, 0) {$5$};
			\node [style=Bead] (4) at (10, 0) {$1$};
			\node [style=Bead] (5) at (11, 0) {$6$};
			\node [style=Bead] (6) at (14, 0) {$3$};
			\node [style=Empty node] (7) at (6, 0) {};
			\node [style=Empty node] (8) at (12, 0) {};
			\node [style=Empty node] (9) at (9, 0) {};
			\node [style=Empty node] (10) at (13, 0) {};
			\node [style=Empty node] (11) at (15, 0) {};
			\node [style=Empty node] (12) at (16, 0) {};
			\node [style=None] (13) at (21, 0) {};
			\node [style=None] (14) at (21.5, 0) {$\dots$};
			\node [style=Empty node] (15) at (17, 0) {};
			\node [style=Empty node] (16) at (18, 0) {};
			\node [style=None] (17) at (0.5, 0) {$(0,2,0,0,1,0)$};
			\node [style=Empty node] (18) at (1, -2.5) {};
			\node [style=Bead] (19) at (2, -2.5) {$4$};
			\node [style=Bead] (20) at (14, -2.5) {$2$};
			\node [style=Bead] (21) at (10, -2.5) {$5$};
			\node [style=Bead] (22) at (7, -2.5) {$1$};
			\node [style=Bead] (23) at (8, -2.5) {$6$};
			\node [style=Bead] (24) at (11, -2.5) {$3$};
			\node [style=Empty node] (25) at (3, -2.5) {};
			\node [style=Empty node] (26) at (4, -2.5) {};
			\node [style=Empty node] (27) at (6, -2.5) {};
			\node [style=Empty node] (28) at (5, -2.5) {};
			\node [style=Empty node] (29) at (12, -2.5) {};
			\node [style=Empty node] (30) at (13, -2.5) {};
			\node [style=None] (31) at (18, -2.5) {};
			\node [style=None] (32) at (18.5, -2.5) {$\dots$};
			\node [style=Empty node] (33) at (9, -2.5) {};
			\node [style=Empty node] (34) at (15, -2.5) {};
			\node [style=None] (35) at (9, -0.75) {};
			\node [style=None] (36) at (9, -1.75) {};
			\node [style=None] (37) at (8.5, -1.25) {$\psi$};
			\node [style=None] (38) at (-2.5, -4.25) {};
			\node [style=None] (39) at (22, -4.25) {};
			\node [style=Empty node] (40) at (4, -6) {};
			\node [style=Bead] (41) at (5, -6) {$4$};
			\node [style=Bead] (42) at (7, -6) {$2$};
			\node [style=Bead] (43) at (8, -6) {$5$};
			\node [style=Bead] (44) at (10, -6) {$1$};
			\node [style=Bead] (45) at (11, -6) {$6$};
			\node [style=Bead] (46) at (14, -6) {$3$};
			\node [style=Empty node] (47) at (6, -6) {};
			\node [style=Empty node] (48) at (12, -6) {};
			\node [style=Empty node] (49) at (9, -6) {};
			\node [style=Empty node] (50) at (13, -6) {};
			\node [style=Empty node] (51) at (15, -6) {};
			\node [style=Empty node] (52) at (16, -6) {};
			\node [style=None] (53) at (21, -6) {};
			\node [style=None] (54) at (21.5, -6) {$\dots$};
			\node [style=Empty node] (55) at (17, -6) {};
			\node [style=Empty node] (56) at (18, -6) {};
			\node [style=None] (57) at (0.5, -6) {$(0,2,1,0,0,0)$};
			\node [style=Empty node] (58) at (19, -6) {};
			\node [style=Empty node] (59) at (20, -6) {};
			\node [style=None] (60) at (9, -6.75) {};
			\node [style=None] (61) at (9, -7.75) {};
			\node [style=None] (62) at (8.5, -7.25) {$\psi$};
			\node [style=Empty node] (63) at (1, -8.5) {};
			\node [style=Bead] (64) at (2, -8.5) {$4$};
			\node [style=Bead] (65) at (14, -8.5) {$2$};
			\node [style=Bead] (66) at (5, -8.5) {$5$};
			\node [style=Bead] (67) at (7, -8.5) {$1$};
			\node [style=Bead] (68) at (8, -8.5) {$6$};
			\node [style=Bead] (69) at (16, -8.5) {$3$};
			\node [style=Empty node] (70) at (3, -8.5) {};
			\node [style=Empty node] (71) at (4, -8.5) {};
			\node [style=Empty node] (72) at (6, -8.5) {};
			\node [style=Empty node] (73) at (10, -8.5) {};
			\node [style=Empty node] (74) at (12, -8.5) {};
			\node [style=Empty node] (75) at (13, -8.5) {};
			\node [style=None] (76) at (18, -8.5) {};
			\node [style=None] (77) at (18.5, -8.5) {$\dots$};
			\node [style=Empty node] (78) at (9, -8.5) {};
			\node [style=Empty node] (79) at (15, -8.5) {};
			\node [style=Empty node] (81) at (11, -8.5) {};
			\node [style=Empty node] (82) at (17, -8.5) {};
			\node [style=None] (83) at (-2.5, -10.25) {};
			\node [style=None] (84) at (22, -10.25) {};
			\node [style=Empty node] (85) at (4, -12) {};
			\node [style=Bead] (86) at (5, -12) {$4$};
			\node [style=Bead] (87) at (7, -12) {$2$};
			\node [style=Bead] (88) at (8, -12) {$5$};
			\node [style=Bead] (89) at (10, -12) {$1$};
			\node [style=Bead] (90) at (11, -12) {$6$};
			\node [style=Bead] (91) at (14, -12) {$3$};
			\node [style=Empty node] (92) at (6, -12) {};
			\node [style=Empty node] (93) at (12, -12) {};
			\node [style=Empty node] (94) at (9, -12) {};
			\node [style=Empty node] (95) at (13, -12) {};
			\node [style=Empty node] (96) at (15, -12) {};
			\node [style=Empty node] (97) at (16, -12) {};
			\node [style=None] (98) at (21, -12) {};
			\node [style=None] (99) at (21.5, -12) {$\dots$};
			\node [style=Empty node] (100) at (17, -12) {};
			\node [style=Empty node] (101) at (18, -12) {};
			\node [style=None] (102) at (0.5, -12) {$(0,2,0,1,0,0)$};
			\node [style=None] (103) at (9, -12.75) {};
			\node [style=None] (104) at (9, -13.75) {};
			\node [style=None] (105) at (8.5, -13.25) {$\epsilon$};
			\node [style=Empty node] (106) at (4, -14.5) {};
			\node [style=Bead] (107) at (10, -14.5) {$4$};
			\node [style=Bead] (108) at (7, -14.5) {$2$};
			\node [style=Bead] (109) at (8, -14.5) {$5$};
			\node [style=Bead] (110) at (5, -14.5) {$1$};
			\node [style=Bead] (111) at (11, -14.5) {$6$};
			\node [style=Bead] (112) at (14, -14.5) {$3$};
			\node [style=Empty node] (113) at (6, -14.5) {};
			\node [style=Empty node] (114) at (12, -14.5) {};
			\node [style=Empty node] (115) at (9, -14.5) {};
			\node [style=Empty node] (116) at (13, -14.5) {};
			\node [style=Empty node] (117) at (15, -14.5) {};
			\node [style=Empty node] (118) at (16, -14.5) {};
			\node [style=None] (119) at (21, -14.5) {};
			\node [style=None] (120) at (21.5, -14.5) {$\dots$};
			\node [style=Empty node] (121) at (17, -14.5) {};
			\node [style=Empty node] (122) at (18, -14.5) {};
			\node [style=None] (123) at (0.5, -14.5) {$(1,2,0,0,0,0)$};
			\node [style=Empty node] (124) at (19, 0) {};
			\node [style=Empty node] (125) at (20, 0) {};
			\node [style=Empty node] (126) at (16, -2.5) {};
			\node [style=Empty node] (127) at (17, -2.5) {};
			\node [style=Empty node] (128) at (19, -12) {};
			\node [style=Empty node] (129) at (20, -12) {};
			\node [style=Empty node] (130) at (20, -14.5) {};
			\node [style=Empty node] (131) at (19, -14.5) {};
			\end{pgfonlayer}
			\begin{pgfonlayer}{edgelayer}
			\draw (0) to (13.center);
			\draw (18) to (31.center);
			\draw [style=Move it] (35.center) to (36.center);
			\draw [style=Extra box] (38.center) to (39.center);
			\draw (40) to (53.center);
			\draw [style=Move it] (60.center) to (61.center);
			\draw (63) to (76.center);
			\draw [style=Extra box] (83.center) to (84.center);
			\draw (85) to (98.center);
			\draw [style=Move it] (103.center) to (104.center);
			\draw (106) to (119.center);
			\end{pgfonlayer}
			\end{tikzpicture}
			\caption{Let $w$ be the labelled abacus from Figure~\ref{Figure process}. As observed, we have $(w,(0,2,0,0,1,0)), (w,(0,2,1,0,0,0))\in J$ and $(w,(0,2,0,1,0,0))\in I$. The diagrams above display the images of maps $\epsilon$ and $\psi$ applied to these three pairs.}
			\label{Figure maps from I and J}
		\end{figure} 
		
		We claim that (\ref{Eq anti pMN rule}) follows once we establish the following two statements:
		\begin{enumerate}[label=\textnormal{(\roman*)}]
			\item The map $\epsilon$ is a weight-preserving involution on $I$, which reverses the sign.
			\item The map $\psi$ is a weight-preserving bijection from $J$ to $\bigcup_{\lambda}\Abc_N(\lambda)$, where the union is taken over partitions $\lambda\in\Par(|\mu|+rm)$ such that $\lambda/\mu$ is an $r$-decomposable skew partition. Moreover, for any $(w,\beta)\in J$ we have $\sgn(\psi(w,\beta)) = \sgn_r(\lambda/\mu)\sgn(w,\beta)$, where $\lambda$ is the shape of $\psi(w,\beta)$.
		\end{enumerate}
		We now prove this claim. We can split the final sum of (\ref{Eq LHS expansion}), which equals the left-hand side of (\ref{Eq anti pMN rule}), as
		\begin{equation*}
		\sum_{(w,\beta)\in I}\sgn(w,\beta)\wt_r(w,\beta) + \sum_{(w,\beta)\in J}\sgn(w,\beta)\wt_r(w,\beta).
		\end{equation*}
		The first sum is zero from (i), while the second sum can be rewritten, using (ii), as
		\begin{align*}
		&\sum_{\substack{\mu\subseteq\lambda\in\Par(|\mu|+rm) \\ \lambda/\mu \textnormal{ is $r$-decomposable}}}\sgn_r(\lambda/\mu)\sum_{w\in\Abc_N(\lambda)}\sgn(w)\wt(w)=\\ &\sum_{\mu\subseteq\lambda\in\Par(|\mu|+rm)}\sgn_r(\lambda/\mu)\sum_{w\in\Abc_N(\lambda)}\sgn(w)\wt(w),
		\end{align*}
		which is the right-hand side of (\ref{Eq anti pMN rule}) by (\ref{Eq antisymmetric}), as required.
		
		Thus it remains to show (i) and (ii). For (i), let $(w,\beta)\in I$ and write $(w',\beta')$ for $\epsilon(w,\beta)$. We keep the notations $B$ and $C$ for the labels of the beads such that the process terminated with non-$r$-mobile bead $B$ which $r$-collided with bead $C$. Clearly, $w'$ lies in $\Abc_N(\mu)$. We now check that $\beta$ lies in $\Com_N(m)$. Since the beads only $r$-move, we have $r\mid w^{-1}(C)-w^{-1}(B)$; thus $\beta'$ is an integral sequence. We also see that bead $C$ has not $r$-moved during the process; hence $w^{-1}(C) > w^{-1}(B)$ and we must have checked whether bead $B$ is $r$-mobile at least $\left( \left( w^{-1}(C)-w^{-1}(B)\right) /r\right) $-times. The latter statement implies that $\beta_B\geq \left( w^{-1}(C)-w^{-1}(B)\right) /r$, and in turn the entries of $\beta'$ are non-negative. From (\ref{Al beta}), clearly, $\beta'$ and $\beta$ have the same sum of entries, and hence $\beta'\in\Com_N(m)$.
		
		By Lemma~\ref{Lemma sign change}(i), we have $\sgn(w')=-\sgn(w)$. The equality of weights $\wt_r(w,\beta)=\wt_r(w',\beta')$ holds true as $w^{-1}(B)+r\beta_B=w^{-1}(C) + r(\beta_B -(w^{-1}(C)-w^{-1}(B))/r)$ and $w^{-1}(C)+r\beta_C=w^{-1}(B) + r(\beta_C +(w^{-1}(C)-w^{-1}(B))/r)$. Hence it remains to verify that $(w',\beta')$ lies in $I$ and $\epsilon(w',\beta')=(w,\beta)$.
		
		The process with $(w',\beta')$ coincides with the process with $(w,\beta)$ where $r$-moves of bead $B$ are replaced with $r$-moves of bead $C$ as long as $\beta'_C\geq (w'^{-1}(B)-w'^{-1}(C))/r$. This inequality holds true as the right-hand side is $(w^{-1}(C)-w^{-1}(B))/r$ which is at most $\beta'_C$ by (\ref{Al beta}). Hence $(w',\beta')\in I$ and the process ends with bead $C$ $r$-colliding with bead $B$. Writing $(w'',\beta'')=\epsilon(w',\beta')$, we see that $w''=w$ and since $\wt_r(w,\beta)=\wt_r(w',\beta')=\wt_r(w'',\beta'')$, we immediately conclude that also $\beta''=\beta$.
		
		We now move to (ii). Clearly, during the process, the weight of $(v,\alpha)$ does not change. Hence $\wt_r(w,\beta)= \wt_r(\psi(w,\beta),(0,0,\dots,0))=\wt(\psi(w,\beta))$, that is $\psi$ preserves weights. The rest of the claim follows from Corollary~\ref{Cor r-decomposable}. In more detail, the statement about sizes in the `moreover' part of Corollary~\ref{Cor r-decomposable} together with implications (ii)$\implies$(i) and (i)$\implies$(iii) shows that the map $\psi$ takes values in the desired set and is surjective, respectively. The uniqueness statement in the `moreover' part shows that $\psi$ is injective and the statement about signs in the `moreover' part shows that $\sgn(\psi(w,\beta)) = \sgn_r(\lambda/\mu)\sgn(w,\beta)$, where $\lambda=\sh(\psi(w,\beta))$.
	\end{proof}
	
	\begin{remark}\label{Remark special cases}
		If we let $r=1$, respectively, $m=1$ in the proof, we obtain the proof of Young's rule, respectively, the Murnaghan--Nakayama rule from \cite{LoehrAbacus10}.
	\end{remark}
	
	\subsection*{Acknowledgements}
	The author would like to thank Mark Wildon for suggesting this project and for his encouragement, as well as the anonymous referees for their comments, which helped greatly improve the manuscript.
	
	\printbibliography
\end{document}